\newtheorem{prethm}{{\bf Theorem}}
\newenvironment{thm}{\begin{prethm}{\hspace{-0.5
               em}{\bf.}}}{\end{prethm}}
\newtheorem{prepro}{{\bf Proposition}}
\newtheorem{precor}{{\bf Corollary}}
\newenvironment{cor}{\begin{precor}{\hspace{-0.5
               em}{\bf.}}}{\end{precor}}
\newtheorem{preconj}{{\bf Conjecture}}
\newtheorem{preremark}{{\bf Remark}}
\newenvironment{remark}{\begin{preremark}\rm{\hspace{-0.5
               em}{\bf.}}}{\end{preremark}}
\newtheorem{prelem}{{\bf Lemma}}
\newenvironment{lem}{\begin{prelem}{\hspace{-0.5
               em}{\bf.}}}{\end{prelem}}
\newtheorem{preproof}{{\bf Proof.}}
\newenvironment{proof}[1]{\begin{preproof}{\rm
               #1}\hfill{$\Box$}}{\end{preproof}}
\newcommand{\od}{{\rm ord}}
\renewcommand{\thefootnote}
\title{Cycles are determined by their domination polynomials}
\author{Saieed Akbari$^{\,\rm a,b}$, ~ Mohammad Reza Oboudi$^{\,\rm b}$ \\
{\footnotesize {\em $^{\rm a}$School of Mathematics, Institute for Research in Fundamental Sciences (IPM),}}\\
 {\footnotesize {\em P.O. Box 19395-5746, Tehran, Iran}}\\
{\footnotesize {\em $^{\rm b}$Department of Mathematical Sciences, Sharif University of Technology,}}\\
{\footnotesize {\em P.O. Box 11155-9415,
 Tehran, Iran}}}
\begin{document}
\footnotetext{{\em E-mail Addresses}: {\tt s\_akbari@sharif.edu}
(S. Akbari), {\tt m\_r\_oboudi@math.sharif.edu} (M.R. Oboudi).}
\date{}
\maketitle
%\begin{quote}
%{\small \hfill{\rule{13.3cm}{.1mm}\hskip2cm}
%\textbf{Abstract}\vspace{1mm}
%{\renewcommand{\baselinestretch}{1}
%\parskip = 0 mm
%the energy of a matrix.}}
%\noindent{\small\noindent{\small {\it AMS
%Classification}}:  05C50; 15A18; 15A42
%\vspace{0.1mm}{\it Keywords}: Energy of matrix; Energy of graph;}
 %\vspace{-3mm}\hfill{\rule{13.3cm}{.1mm}\hskip2cm}
%\end{quote}
%\section{Introduction}

\begin{abstract}
Let $G$ be a simple graph of order $n$. A dominating set of $G$
is a set $S$ of vertices of $G$ so that every vertex of $G$ is
either in $S$ or adjacent to a vertex in $S$. The domination
polynomial of  $G$  is the polynomial $D(G,x)=\sum_{i=1}^{n}
d(G,i) x^{i}$,
 where $d(G,i)$ is the number of dominating sets  of $G$ of size
 $i$. In this paper we show that cycles are determined by their domination polynomials.

\end{abstract}

\noindent{\small\noindent{\small {\it AMS Classification}}:
05C38, 05C69.

\noindent\vspace{0.1mm}{\it Keywords}: Cycle; Dominating set;
Domination polynomial; Equivalence class.}

\section{Introduction}

Throughout this paper we will consider only simple graphs. Let
$G=(V,E)$ be a simple graph. The {\it order} of $G$ denotes the
number of vertices of $G$. For every vertex $v\in V$, the {\it
closed neighborhood} of $v$ is the set $N[v]=\{u \in V|uv\in E\}\cup
\{v\}$.
 For a set $S\subseteq V$, the closed neighborhood of $S$ is $N[S]=\bigcup_{v\in S}
 N[v]$. A set $S\subseteq V$ is a {\it dominating set} if $N[S]=V$, or
equivalently, every vertex in $V\backslash S$ is adjacent to at
least one vertex in $S$.
 The {\it domination number} $\gamma(G)$ is the minimum cardinality of a dominating set in $G$.
 A dominating set with cardinality $\gamma(G)$ is called a {\it $\gamma$-set}.
  For a detailed treatment of this parameter, the reader is referred to~\cite{hhs}.
 Let ${\cal D}(G,i)$ be the family of dominating sets of a graph
$G$ with cardinality $i$ and let $d(G,i)=|{\cal D}(G,i)|$.
 The {\it domination polynomial} $D(G,x)$ of $G$ is defined as
$D(G,x)=\sum_{i=1}^{|V(G)|} d(G,i) x^{i}$, ( see \cite{a}). Two
graphs $G$ and $H$ are said to be  {\it${\cal D}$-equivalent},
written $G\sim H$, if $D(G,x)=D(H,x)$. The {\it${\cal
D}$-equivalence class} of $G$ is defined as $[G]=\{H: H\sim G\}$. A
graph $G$ is said to be {\it${\cal D}$-unique}, if $[G]=\{G\}$. For
two graphs $G_1=(V_1,E_1)$ and $G_2=(V_2,E_2)$, {\it join} of $G_1$
and $G_2$ denoted by $G_1\vee G_2$ is a graph with vertex set
$V(G_1)\cup V(G_2)$ and edge set $E(G_1)\cup E(G_2)\cup \{uv| u\in
V(G_1)$ and $v\in V(G_2)\}$. We denote the complete graph of order
$n$, the cycle of order $n$, and the path of order $n$, by $K_n$,
$C_n$, and $P_n$, respectively. Also we denote $K_1\vee C_{n-1}$ by
$W_n$ and call it {\it wheel} of order $n$.

 Let $n\in \mathbb{Z}$ and $p$ be a prime
number. Then if $n$ is not zero, there is a nonnegative integer $a$
such that $p^a\mid n$ but $p^{a+1}\nmid n$, we let $\od_p\,n=a$.
% As proved in \cite[p. 5]{ir}, one can
%easily see that if $a,b\in \mathbb{Z}$ and $p$ is a prime number
%then
%\begin{equation}\label{ord}
%\hbox{ $\od_p\,ab = v~a +ord_p~b .$}
%\end{equation}
%If $a,b,m\in \mathbb{Z}$ and $m\neq 0$, we say that $a$ is
%congruent to $b$ modulo $m$ if $m$ divides $b-a$. This relation
%is written $a\equiv b\pmod m$.

In \cite{aap}, the following question was posed:
\begin{quote}
    {\it For every natural number n, $C_n$ is ${\cal D}$-unique}.
\end{quote}

Also in \cite{aap} it was proved that $P_n$ for $n\equiv 0\pmod3$
has ${\cal D}$-equivalence class of size two and it contains $P_n$
and the graph obtained  by adding two new vertices joined to two
adjacent vertices of $C_{n-2}$. In this paper we show that for every
positive integer $n$, $C_n$ is ${\cal D}$-unique.

\section{${\cal D}$-uniqueness of cycles}

In this section we will prove that $C_n$ is ${\cal D}$-unique.
This answers in affirmative a problem proposed in \cite{aap} on
${\cal D}$-equivalence class of $C_n$. As a consequence we obtain
that $W_n$ is ${\cal D}$-unique. We let $C_1=K_1$ and $C_2=K_2$.
We begin by the following lemmas.
\begin{lem} {\rm\cite{aap}}\label{regular}
Let $H$ be a $k$-regular graph with $N[u]\neq N[v]$, for every
$u,v\in V(H)$. If $D(G,x)=D(H,x)$, then $G$ is also a $k$-regular
graph.
\end{lem}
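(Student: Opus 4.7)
The plan is to exploit the identity
$$d(G, n - t) \;=\; \binom{n}{t} \,-\, |\mathcal{B}_t(G)|,$$
where $\mathcal{B}_t(G)$ denotes the family of size-$t$ subsets $T\subseteq V(G)$ such that $V(G)\setminus T$ is not a dominating set; equivalently, $T\in\mathcal{B}_t(G)$ iff some $z\in T$ satisfies $N[z]\subseteq T$. Since $D(G,x)=D(H,x)$, every coefficient $d(G,n-t)$ can be read off from $H$, yielding one equation for $G$ at each level $t$.

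First I would evaluate $|\mathcal{B}_t(H)|$ using only $k$-regularity and the no-closed-twin hypothesis. Because $|N_H[v]|=k+1$ for every $v$, no set of size $t\leq k$ can contain a closed neighborhood, so $|\mathcal{B}_t(H)|=0$ for $1\leq t\leq k$. For $t=k+1$ the containment $N_H[v]\subseteq T$ forces $T=N_H[v]$, and the no-closed-twin hypothesis guarantees that the $n$ sets $N_H[v]$ are pairwise distinct, so $|\mathcal{B}_{k+1}(H)|=n$. The resulting constraints on $G$ are $d(G,n-t)=\binom{n}{t}$ for $1\leq t\leq k$ and $d(G,n-k-1)=\binom{n}{k+1}-n$.

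Next I would use the first family of constraints to push the minimum degree of $G$ up to $k$ inductively. If $\delta(G)\geq t$, then for any $T$ with $|T|=t+1$ and any $z\in T$ one has $|N[z]|\geq t+1=|T|$, so the containment $N[z]\subseteq T$ forces $N[z]=T$ and $\deg(z)=t$. The equation $|\mathcal{B}_{t+1}(G)|=0$ (valid as long as $t+1\leq k$) therefore rules out any vertex of degree $t$, so $\delta(G)\geq t+1$. Starting from the trivial $\delta(G)\geq 0$ and iterating for $t=0,1,\ldots,k-1$, I obtain $\delta(G)\geq k$.

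Finally, the equation $|\mathcal{B}_{k+1}(G)|=n$ combined with $\delta(G)\geq k$ yields regularity: by the same reasoning as above, the bad $(k+1)$-sets of $G$ are exactly the distinct closed neighborhoods $N_G[z]$ of the degree-$k$ vertices of $G$, so the number of such sets is at most the number of degree-$k$ vertices, itself at most $n$. Forcing both estimates to be equalities shows that every vertex of $G$ has degree exactly $k$, so $G$ is $k$-regular. The main subtlety is arranging the induction so that the no-closed-twin hypothesis is used twice in complementary ways: first to pin down $|\mathcal{B}_{k+1}(H)|=n$ exactly, and then, after the minimum-degree floor $\delta(G)\geq k$ has been erected, to convert that exact count into the conclusion that every vertex of $G$ is a degree-$k$ vertex.
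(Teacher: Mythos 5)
Your proof is correct. Note that the paper does not actually prove this lemma --- it is quoted from reference [2] (Akbari, Alikhani, Peng) --- so there is no in-paper argument to compare against; your complementary-counting route via $d(G,n-t)=\binom{n}{t}-|\mathcal{B}_t(G)|$ is the natural one and is essentially the argument given there. All the steps check out: the bad sets of size at most $k$ in $H$ vanish, the bad $(k+1)$-sets of $H$ are exactly the $n$ pairwise distinct closed neighborhoods, the vanishing constraints force $\delta(G)\ge k$ (indeed a single vertex of degree $d<k$ already yields a bad set of size $d+1\le k$, so the induction could be compressed), and the sandwich $n=|\mathcal{B}_{k+1}(G)|\le\#\{v:\deg_G(v)=k\}\le n$ forces regularity. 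Two small points worth making explicit: you tacitly use $|V(G)|=|V(H)|$, which follows since $\deg D(G,x)=|V(G)|$; and in the last step it is not the no-closed-twin hypothesis that is ``used again'' on $G$ (you know nothing about closed twins in $G$) but only the exact count $|\mathcal{B}_{k+1}(G)|=n$ inherited from $H$, combined with the trivial bound that distinct bad $(k+1)$-sets inject into degree-$k$ vertices.
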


\begin{lem} {\rm\cite[Theorem 2.2.3]{a}}\label{union} If $G$ has $m$ connected components $G_1,\dots,G_m $.
Then $D(G,x)=\prod_{i=1}^m D(G_{n_i},x)$.
\end{lem}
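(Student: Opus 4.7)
The plan is to reduce the statement by induction on $m$ to the case $m=2$, and then prove that case by establishing a bijection between dominating sets of $G_1\cup G_2$ (disjoint union) and pairs $(S_1,S_2)$ with $S_i$ a dominating set of $G_i$. Once that bijection is in hand, the counting identity
\[
d(G,i)=\sum_{j+k=i}d(G_1,j)\,d(G_2,k)
\]
is immediate, and summing $d(G,i)x^i$ over $i$ gives exactly the Cauchy product $D(G_1,x)\,D(G_2,x)$, which is the $m=2$ case of the lemma.

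For the bijection, the key observation is that since $G_1$ and $G_2$ are connected components of $G$, there are no edges of $G$ with one endpoint in $V(G_1)$ and the other in $V(G_2)$. Consequently, for any $v\in V(G_i)$ the closed neighborhood computed in $G$ satisfies $N_G[v]=N_{G_i}[v]\subseteq V(G_i)$. Hence a set $S\subseteq V(G_1)\cup V(G_2)$ dominates $v$ in $G$ if and only if $S\cap V(G_i)$ dominates $v$ in $G_i$. Running this equivalence over all $v$ shows that $S$ dominates $G$ if and only if $S_i:=S\cap V(G_i)$ dominates $G_i$ for $i=1,2$. Conversely, any pair $(S_1,S_2)$ of dominating sets glues back to a dominating set $S_1\cup S_2$ of $G$, and the assignment $S\mapsto (S\cap V(G_1),S\cap V(G_2))$ is clearly inverse to this gluing. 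Because $V(G_1)$ and $V(G_2)$ are disjoint, cardinalities add: $|S|=|S_1|+|S_2|$, which is precisely what the convolution above records.

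The inductive step from $m$ to $m+1$ is routine: view $G_1\cup\cdots\cup G_{m+1}$ as the disjoint union of the two graphs $H=G_1\cup\cdots\cup G_m$ and $G_{m+1}$, apply the $m=2$ case to get $D(G,x)=D(H,x)\,D(G_{m+1},x)$, and invoke the induction hypothesis on $H$.

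I expect no real obstacle here; the argument is essentially bookkeeping that makes precise the intuition that domination is a local property and therefore factors over connected components. The only mild subtlety is that one must allow $S\cap V(G_i)$ to vary over \emph{nonempty} subsets (a dominating set of a nonempty graph is nonempty), but since the definition $D(G,x)=\sum_{i\ge 1}d(G,i)x^i$ already starts the sum at $i=1$, this matches the indexing on both sides and requires no additional care.
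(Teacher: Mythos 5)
Your proof is correct: the observation that $N_G[v]=N_{G_i}[v]$ for $v$ in a component $G_i$ gives the bijection $S\mapsto(S\cap V(G_1),\dots,S\cap V(G_m))$, the cardinalities add, and the convolution identity is exactly the coefficient-wise statement of the product formula. The paper itself only cites this lemma from Alikhani's thesis without reproducing a proof, and your argument is the standard one that source uses, so there is nothing to compare beyond noting agreement.
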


%\begin{lem}\label{contract} {\rm\cite{ap}} Let $G$ be a graph
%containing simple path of length at least 3.
%Then:$$D(G,x)=x(D(G.e_1,x)+D(G.e_1.e_2,x)+D(G.e_1.e_2.e_3,x)).$$
%Where $e_1,e_2,e_3$ are three edges of a simple path.
%\end{lem}
 %As a consequence of above lemma is following:
 The next lemma gives a recursive formula for the determination of
 the domination polynomial of cycles.
\begin{lem}\label{cycle} {\rm\cite[Theorem 4.3.6]{a}} For every $n\geq 4$,
$$D(C_n,x)=x(D(C_{n-1},x)+D(C_{n-2},x)+D(C_{n-3},x)).$$
\end{lem}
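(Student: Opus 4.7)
The recursion is equivalent, by comparing coefficients of $x^i$, to the identity
$$d(C_n,i)=d(C_{n-1},i-1)+d(C_{n-2},i-1)+d(C_{n-3},i-1)$$
valid for every $i\geq 1$. My plan is to prove this bijectively by constructing a map
$$\Phi:\mathcal{D}(C_n,i)\longrightarrow\mathcal{D}(C_{n-1},i-1)\sqcup\mathcal{D}(C_{n-2},i-1)\sqcup\mathcal{D}(C_{n-3},i-1)$$
that partitions $\mathcal{D}(C_n,i)$ into three pieces indexed by a parameter $d\in\{1,2,3\}$, with the $d$-th piece in natural bijection with $\mathcal{D}(C_{n-d},i-1)$.

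Label $V(C_n)=\{v_1,\ldots,v_n\}$ cyclically. Given $S\in\mathcal{D}(C_n,i)$, let $v^{*}=v_j$ be the element of $S$ of smallest index and let $d$ be the cyclic forward distance from $v^{*}$ to the next element of $S$ (this is well defined because $n\geq 4$ forces $|S|\geq 2$). The crucial observation is $d\in\{1,2,3\}$: otherwise $v_{j+1},v_{j+2},v_{j+3}$ would all lie outside $S$, but then $v_{j+2}$, whose only neighbors are $v_{j+1}$ and $v_{j+3}$, would fail to be dominated, contradicting $S\in\mathcal{D}(C_n,i)$. Delete $v^{*}$ together with the $d-1$ non-$S$ vertices $v_{j+1},\ldots,v_{j+d-1}$ that follow it; the surviving $n-d$ vertices, relabeled in their original cyclic order starting from $v_1$, form $C_{n-d}$ with vertex set $u_1,\ldots,u_{n-d}$. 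Let $S'$ be the image of $S\setminus\{v^{*}\}$ under this relabeling and put $\Phi(S):=(d,S')$.

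To verify $S'\in\mathcal{D}(C_{n-d},i-1)$, note that the only vertices whose neighborhoods differ between $C_n$ and $C_{n-d}$ are $v_{j-1}$ and $v_{j+d}$, which are now adjacent in $C_{n-d}$. Since $v_{j+d}$ is the next element of $S$ after $v^{*}$, it lies in $S'$, covering itself as well as $v_{j-1}$ (which in $C_n$ had been dominated precisely by $v^{*}$); every other vertex retains its original dominator. For the inverse, given $(d,S')$, I would identify the smallest-indexed element $u_k\in S'$ and reinsert $d$ new vertices $v_k,\ldots,v_{k+d-1}$ between $u_{k-1}$ and $u_k$, placing $v_k$ into the set. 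The bound $d\leq 3$ is exactly what guarantees that each of the $d-1$ inserted non-$S$ vertices is adjacent to either the inserted $v_k$ or to $u_k$, hence dominated. The main obstacle is the bookkeeping: one must check that in the reconstructed set $v_k$ really is the smallest-indexed $S$-member (which follows from $u_1,\ldots,u_{k-1}\notin S'$ by the minimality of $u_k$) and that its forward gap is exactly $d$, so that $\Phi$ and this reinsertion are mutual inverses; both checks fall out of the fact that the canonical relabeling is determined by the position of $v^{*}$.
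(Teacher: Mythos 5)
Your bijection is correct, and it is worth noting that the paper itself offers no proof of this lemma at all: it is quoted from Alikhani's thesis, where (as in the Alikhani--Peng treatment of cycles) the recurrence is obtained by a rather lengthy case analysis on how the families ${\cal D}(C_{n-1},i-1)$, ${\cal D}(C_{n-2},i-1)$, ${\cal D}(C_{n-3},i-1)$ generate ${\cal D}(C_n,i)$. Your single explicit bijection, keyed to the gap $d\in\{1,2,3\}$ after the smallest-indexed vertex of $S$, is a cleaner and fully self-contained alternative; all the essential checks are present. The key points all hold up: $|S|\ge 2$ for $n\ge 4$ so the gap is defined; the gap cannot exceed $3$ or the middle vertex of the gap is undominated; since $v^*$ has the smallest index, the deleted arc $v_j,\dots,v_{j+d-1}$ never wraps around, so the relabeling is canonical and the smallest-indexed vertex of $S'$ sits exactly where the inverse map expects it; and the convention $C_1=K_1$, $C_2=K_2$ adopted in the paper is precisely what the contraction produces when $n-d\le 2$. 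Two cosmetic remarks: the parenthetical claim that $v_{j-1}$ "had been dominated precisely by $v^*$" is not always literally true (for $j\in\{1,2\}$ it may also be dominated by $v_{j-2}$ or itself), but nothing in the argument uses this; and in the inverse direction you should also record that the vertex $u_{k-1}$ preceding the insertion point remains dominated --- in $C_n$ it is adjacent to the newly inserted $v_k\in S$, so this is immediate and symmetric to your forward-direction check on $v_{j-1}$. With those observations written out, the bookkeeping you flag as the "main obstacle" closes without difficulty.
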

%With the initial values $D(C_1, x)=x$, $D(C_2, x)=x^2+2x$,
%$D(C_3,x)=x^3+3x^2+3x.$
%\begin{lem} \label{lemma1} Let $G$ be a simple graph. If
%$D(G,x)=D(C_n,x)$ then $G$ is a union of some disjoint cycles.
%\end{lem}
%\begin{proof}{By theorem (\ref{regular}), $G$ is 2-regular. Therefore $G$ is a union of many disjoint
%cycles.}
%\end{proof}

%Suppose that $D(G,x)=D(C_n,x)$ then $G$ is a union of many
%disjoint cycles say $C_{n_1},\ldots ,C_{n_k},$ and
%$n=n_1+\ldots+n_k$, also by theorem(\ref{theorem4}),
%$D(C_n,x)=\prod_{i=1}^m D(C_{n_i},x)$.
\begin{lem}\label{Zhang} {\rm\cite[Theorem 1]{fhrv}} For every $n\geq 1$, $\gamma(C_n)=\lceil\frac{n}{3}\rceil$.
\end{lem}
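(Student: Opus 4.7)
The plan is to establish $\gamma(C_n)=\lceil n/3\rceil$ by matching lower and upper bounds. The base cases $n=1$ and $n=2$ reduce to $\gamma(K_1)=\gamma(K_2)=1=\lceil n/3\rceil$ using the conventions $C_1=K_1$ and $C_2=K_2$ adopted at the top of Section~2, so from here on I may assume $n\geq 3$ and label the vertices of $C_n$ as $v_0,v_1,\ldots,v_{n-1}$ with indices read modulo $n$, so that $v_i$ is adjacent exactly to $v_{i-1}$ and $v_{i+1}$.

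For the lower bound, I would use a closed-neighborhood counting argument. For every vertex $v$ of $C_n$ (with $n\geq 3$) the graph is $2$-regular, so $|N[v]|=3$. Hence for any dominating set $S$,
$$n=|V(C_n)|=|N[S]|=\Bigl|\bigcup_{v\in S}N[v]\Bigr|\leq 3|S|,$$
which yields $|S|\geq n/3$, and since $|S|$ is an integer, $|S|\geq \lceil n/3\rceil$.

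For the upper bound, I would exhibit an explicit dominating set of cardinality $\lceil n/3\rceil$ by spacing vertices periodically. Write $n=3q+r$ with $r\in\{0,1,2\}$. For $r=0$ take $S=\{v_0,v_3,\ldots,v_{3(q-1)}\}$; for $r\in\{1,2\}$ take the same set and adjoin $v_{3q}$. Each chosen vertex $v_{3i}$ dominates the three consecutive vertices $v_{3i-1},v_{3i},v_{3i+1}$, and a short case check in each of the three residues modulo $3$ confirms that $N[S]=V(C_n)$ and $|S|=\lceil n/3\rceil$.

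The main obstacle here is essentially nonexistent: the result is a classical pigeonhole bound matched against a periodic construction. The only mild technicality is handling the three residue classes $n\bmod 3$ separately in the upper-bound construction, plus treating the degenerate values $n\in\{1,2\}$ by hand.
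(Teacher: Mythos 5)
Your proof is correct. The paper does not actually prove this lemma---it is quoted from the reference \cite{fhrv}---so there is no internal argument to compare against; your self-contained proof is the standard one, with the lower bound $|S|\geq n/3$ following from $|N[S]|\leq\sum_{v\in S}|N[v]|=3|S|$ for the $2$-regular graph $C_n$ ($n\geq 3$), the matching upper bound from the periodic set $\{v_0,v_3,\dots\}$ (with one extra vertex when $3\nmid n$), and the degenerate cases $n\in\{1,2\}$ handled via the paper's conventions $C_1=K_1$, $C_2=K_2$. All three residue cases of the construction check out, so the argument is complete.
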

\begin{lem}\label{-1} If $n$ is a positive integer and $\alpha_n:=D(C_n,-1)$, then the following holds:
$$\alpha_n=\left\{
  \begin{array}{ll}
 3, & \hbox {if~ $n\equiv0\pmod4$;}  \\
-1, & \hbox {otherwise}.
 \end{array}
 \right.$$
\end{lem}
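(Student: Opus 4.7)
The plan is to substitute $x=-1$ into the three-term recursion of Lemma~\ref{cycle} to obtain a linear recurrence for $\alpha_n$, verify the four base values, and then induct on $n$ modulo $4$.

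First I would compute $\alpha_1,\alpha_2,\alpha_3$ directly from the convention $C_1=K_1$, $C_2=K_2$ and the fact that $C_3=K_3$. Using $D(K_1,x)=x$, $D(K_2,x)=x^2+2x$ and $D(K_3,x)=x^3+3x^2+3x$, all three base values evaluate to $-1$ at $x=-1$. Then from Lemma~\ref{cycle} one obtains
\[
\alpha_n=-(\alpha_{n-1}+\alpha_{n-2}+\alpha_{n-3})\qquad(n\geq 4),
\]
and in particular $\alpha_4=-(-1-1-1)=3$, confirming the claim for $n=4$.

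Next I would run an induction on $n\geq 5$, treating the four residue classes modulo $4$ uniformly. The key observation is that the three indices $n-1,n-2,n-3$ form three consecutive integers, so their residues mod~$4$ are three distinct values out of $\{0,1,2,3\}$. Hence exactly one of $\alpha_{n-1},\alpha_{n-2},\alpha_{n-3}$ equals $3$ and the other two equal $-1$ unless none of $n-1,n-2,n-3$ is divisible by $4$, which happens precisely when $n\equiv 0\pmod 4$. In the former case the sum is $3-1-1=1$, so $\alpha_n=-1$; in the latter case all three summands equal $-1$, giving $\alpha_n=-(-3)=3$. This matches the two cases in the statement.

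There is no real obstacle here: the entire argument is bookkeeping once the base cases are pinned down. The only mild care needed is in handling $n=1,2,3$ separately since Lemma~\ref{cycle} is only stated for $n\geq 4$, and in making sure the conventions $C_1=K_1$, $C_2=K_2$ (from the paragraph just before Lemma~\ref{regular}) are used when evaluating $\alpha_1$ and $\alpha_2$.
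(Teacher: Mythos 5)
Your proposal is correct and follows exactly the paper's argument: substitute $x=-1$ into the recursion of Lemma~\ref{cycle} to get $\alpha_n=-(\alpha_{n-1}+\alpha_{n-2}+\alpha_{n-3})$ and induct on $n$; the paper leaves the base cases and the residue-class bookkeeping implicit, and you have simply filled them in correctly.
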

\begin{proof}{By Lemma \ref{cycle}, for every $n\geq4$, $\alpha_n=-(\alpha_{n-1}+\alpha_{n-2}+\alpha_{n-3})$. Now, by induction on $n$ the proof is complete.}
\end{proof}

\begin{lem}\label{ord} For every positive integer $n$,
$$\od_3\,D(C_n,-3)=\left\{
  \begin{array}{ll}
 \lceil\frac{n}{3}\rceil+1, & \hbox {if~ $n\equiv0\pmod3$;}  \\
\lceil\frac{n}{3}\rceil \hbox{~or~} \lceil\frac{n}{3}\rceil+1, & \hbox {if ~$n\equiv1\pmod3$;} \\
 \lceil\frac{n}{3}\rceil, & \hbox {if ~$n\equiv2\pmod3$}.
  \end{array}
 \right.$$
\end{lem}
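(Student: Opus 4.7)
The plan is to work with the integer sequence $\gamma_n := D(C_n,-3)$. By Lemma~\ref{cycle} applied at $x=-3$, it satisfies $\gamma_n = -3(\gamma_{n-1}+\gamma_{n-2}+\gamma_{n-3})$ for $n\ge 4$, with base values $\gamma_1=-3$, $\gamma_2=3$, $\gamma_3=-9$ read off from $D(C_1,x)=x$, $D(C_2,x)=x^2+2x$, $D(C_3,x)=x^3+3x^2+3x$; these directly verify the claim for $n\le 3$. A simple strong induction, using $\od_3(-3s)=1+\od_3 s$ and minimizing over the three preceding indices, yields the uniform lower bound $\od_3 \gamma_n \ge \lfloor n/3\rfloor+1$ for all $n\ge 1$. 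This already matches the target valuation when $n\equiv 0$ or $n\equiv 2\pmod 3$, and is one below the larger possibility when $n\equiv 1\pmod 3$.

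The next step is to normalize by setting $c_n := \gamma_n/3^{\lfloor n/3\rfloor+1}\in\mathbb{Z}$. Substituting into the $\gamma$-recurrence and collecting powers of $3$ produces three case-dependent recurrences: $c_n=-c_{n-1}-c_{n-2}-c_{n-3}$ for $n\equiv 0\pmod 3$, $c_n=-3c_{n-1}-c_{n-2}-c_{n-3}$ for $n\equiv 1\pmod 3$, and $c_n=-3c_{n-1}-3c_{n-2}-c_{n-3}$ for $n\equiv 2\pmod 3$. In this language the lemma amounts to $3\nmid c_n$ when $n\not\equiv 1\pmod 3$ and $9\nmid c_n$ when $n\equiv 1\pmod 3$. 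Writing $A_m=c_{3m}$, $B_m=c_{3m+1}$, $C_m=c_{3m+2}$ and reducing each recurrence modulo $3$, I would solve in order: $C_m\equiv(-1)^m\pmod 3$ from $C_m\equiv -C_{m-1}$; then $B_m\equiv(m+2)(-1)^m\pmod 3$ from the affine recurrence $B_m+B_{m-1}\equiv(-1)^m\pmod 3$ via an Ansatz with an $m(-1)^m$ particular term and an $(-1)^m$ homogeneous correction; and finally $A_m\equiv(2m^2+m+1)(-1)^m\pmod 3$ by the same method with an $m^2(-1)^m$ correction. Since the residues of $2m^2+m+1$ at $m\equiv 0,1,2\pmod 3$ are $1,1,2$, neither $A_m$ nor $C_m$ vanishes modulo $3$, which settles the cases $n\equiv 0\pmod 3$ and $n\equiv 2\pmod 3$.

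The only remaining subcase is $n\equiv 1\pmod 3$ with $B_m\equiv 0\pmod 3$, which by the $B$-formula happens exactly when $m\equiv 1\pmod 3$, that is, when $n=9k+4$. For these indices the plan is to lift the previous analysis one $3$-adic level: expand the $c$-recurrences modulo $9$, so that the previously suppressed $-3c_{n-1}$ and $-3c_{n-2}$ terms now carry information, refine the formulas for $A_m$, $B_m$, $C_m$ to residues modulo $9$ by the same substitution-and-Ansatz procedure, and from these derive a linear recurrence for $B_m/3\pmod 3$ along the arithmetic progression $m=3k+1$; solving it explicitly and verifying that its solution is never $0\pmod 3$ then rules out $9\mid c_{9k+4}$. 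This modulo-$9$ bookkeeping is the main obstacle, since one must carry second-order $3$-adic information on all three sub-sequences simultaneously, whereas every other part of the argument reduces to solving a handful of elementary affine recurrences over $\mathbb{Z}/3\mathbb{Z}$.
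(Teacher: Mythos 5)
Your computations through the mod-$3$ layer are all correct: the normalization $c_n=D(C_n,-3)/3^{\lfloor n/3\rfloor+1}$, the lower bound $\od_3\,D(C_n,-3)\ge\lfloor n/3\rfloor+1$, the three case recurrences for $c_n$, and the closed forms $C_m\equiv(-1)^m$, $B_m\equiv(m+2)(-1)^m$, $A_m\equiv(2m^2+m+1)(-1)^m\pmod 3$ all check out against direct computation of the first several terms. This settles the cases $n\equiv 0\pmod 3$ and $n\equiv 2\pmod 3$ completely, and also the subcase $n\equiv 1\pmod 3$ with $n\not\equiv 4\pmod 9$; your conclusion that the larger valuation can occur only for $n\equiv 4\pmod 9$ agrees exactly with the remark the authors record after this lemma. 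Architecturally this is the paper's proof (strip off the forced power of $3$, reduce to showing the integer quotient is never divisible by $9$), but your treatment of the easy half is cleaner: the paper instead tabulates thirty values of its quotient $b_n$ modulo $9$ and proves $b_{t+27}\equiv b_t\pmod 9$ by induction, which disposes of all residue classes at once without identifying which $n\equiv 1\pmod 3$ actually attain the value $\lceil\frac{n}{3}\rceil+1$.

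However, for $n\equiv 4\pmod 9$ the lemma still requires $9\nmid c_n$, and here you have only a plan, not a proof. The plan is sound in principle --- the triple $(A_m,B_m,C_m)$ modulo $9$ evolves by a fixed linear recursion on a finite set, so the question reduces to a finite check --- but the Ansatz route you sketch will be substantially messier than the mod-$3$ stage: the forcing terms $3A_m$ and $3B_m$ enter the mod-$9$ recurrences through their residues mod $3$, which by your own formulas depend on $m$ modulo $3$, so no single polynomial-times-$(-1)^m$ expression can describe $B_m$ or $C_m$ modulo $9$, and each subsequence would have to be split into three further progressions. The least error-prone way to finish is essentially what the paper does: iterate the exact recurrences for the triple modulo $9$ until it repeats (the period is $9$ in $m$, i.e.\ $27$ in $n$) and observe that $B_m\not\equiv 0\pmod 9$ at the three indices $m\equiv 1\pmod 3$ occurring in one period. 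Until that finite verification, or your mod-$9$ Ansatz, is actually carried out, the case $n\equiv 1\pmod 3$ of the lemma remains unproved in your write-up.
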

\begin{proof}{Let $a_n:=D(C_n,-3)$. By Lemma \ref{cycle}, for any $n\geq 4$, $a_n=-3(a_{n-1}+a_{n-2}+a_{n-3})$. Since
$D(C_1,x)=x$, $D(C_2,x)=x^2+2x$, and $D(C_3,x)=x^3+3x^2+3x$, one
has $a_1=-3$, $a_2=3$, $a_3=-9$. Now, by induction on $n$ one can
easily see that $\od_3\,a_n\ge \lceil\frac{n}{3}\rceil$. Suppose
that $a_n=(-1)^n3^{{\lceil\frac{n}{3}\rceil}}b_n$. By Lemma
\ref{cycle} it follows that for every $n$, $n\geq 4$ the following
hold,
\begin{equation}\label{bn}b_n=\left\{
  \begin{array}{ll}
 3b_{n-1}-3b_{n-2}+b_{n-3}, & \hbox {if~ $n\equiv0\pmod3$;} \\
b_{n-1}-b_{n-2}+b_{n-3}, & \hbox {if ~$n\equiv1\pmod3$;}\\
 3b_{n-1}-b_{n-2}+b_{n-3}, & \hbox {if ~$n\equiv2\pmod3$}.
  \end{array}
 \right.\end{equation}
It turns out that $b_i$, for every $i$, $1\leq i\leq 30$ modulo
9, are as follows:\\
1, 1, 3, 3, 7, 6, 2, 7, 3, 7, 7, 3, 3, 4, 6, 5, 4, 3, 4, 4, 3, 3,
1, 6, 8, 1, 3, 1, 1, 3.

So for every $n$, $1\leq n\leq 30$, $9\nmid b_n$. By (\ref{bn})
and induction on $t$, it is easily seen that for every $t$,
$t\geq1$, $b_{t+27}\equiv b_t\pmod9$. Hence for any positive
integer $n$, $9\nmid b_n$ or equivalently $\od_3\,b_n\leq1 $. By
induction on $n$, and using (\ref{bn}), we find that if $n\equiv
0\pmod3$, $\od_3\,b_n=1 $, and $\od_3\,b_n\in\{0,1\}$ for
$n\equiv1\pmod3$, and $\od_3\,b_n=0$ for $n\equiv2\pmod3$. This
completes the proof.}
\end{proof}

\begin{remark} Since for every $t$, $t\geq1$, $b_{t+27}\equiv b_t\pmod9$, by considering $b_n$ for $1\leq n\leq 30$,
 we conclude that in the  case $n\equiv 1\pmod3$,
$$\od_3\,a_n=\left\{
  \begin{array}{ll}
    \lceil\frac{n}{3}\rceil+1,& \hbox{if ~$ n\in \{4, 13, 22\}$}\, (mod\,27 ); \\
    \lceil\frac{n}{3}\rceil, & \hbox{otherwise.}
  \end{array}
\right.$$
\end{remark}

Now, we prove our main result.

\begin{thm}\label{theorem5} For every positive integer $n$, cycle $C_n$ is ${\cal
  D}$-unique.
\end{thm}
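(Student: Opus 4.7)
The plan is to show that $D(G,x) = D(C_n,x)$ forces $G \cong C_n$. For $n \in \{1, 2, 3\}$ the claim is immediate: $C_n = K_n$, and from $D(G, x)$ one reads off $|V(G)| = n$ and $d(G, 1) = n$, which forces every vertex of $G$ to dominate the whole graph, so $G = K_n$. Thus assume $n \geq 4$.

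Since $C_n$ is $2$-regular with pairwise distinct closed neighborhoods for $n \geq 4$, Lemma~\ref{regular} forces $G$ to be $2$-regular. Hence $G = C_{n_1} \cup \cdots \cup C_{n_k}$ with each $n_i \geq 3$ and $\sum n_i = n$, and Lemma~\ref{union} gives $D(G, x) = \prod_{i=1}^k D(C_{n_i}, x)$. The task reduces to proving $k = 1$.

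I extract two invariants from $D(G,x)$. First, the smallest nonzero exponent is $\gamma(G) = \sum \lceil n_i/3 \rceil$, which by Lemma~\ref{Zhang} must equal $\lceil n/3 \rceil$; a short residue computation pins down the distribution of $n_i \bmod 3$. Second, comparing $3$-adic valuations at $x = -3$ via Lemma~\ref{ord} (and the Remark refining it) gives $k = 1$ directly when $n \equiv 0$ or $2 \pmod{3}$, and restricts $k \in \{1, 2, 3\}$ when $n \equiv 1 \pmod{3}$, together with constraints on the residues of the $n_i$ mod $3$. Lemma~\ref{-1} then rules out $k = 2$: a product of two elements of $\{3, -1\}$ lies in $\{9, -3, 1\}$, which is disjoint from $\{D(C_n, -1)\} \subseteq \{3, -1\}$.

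The main obstacle is the single surviving configuration: $n \equiv 1 \pmod{3}$ with $n \equiv 4, 13, 22 \pmod{27}$, $k = 3$, two cycles of length $\equiv 2 \pmod{3}$ (each of length $\geq 5$) and one of length $\equiv 0 \pmod{3}$ (length $\geq 3$). To eliminate it, I would compare the coefficient of $x^\gamma$: a standard gap-enumeration in cycles gives $d(G, \gamma) = 3 n_1 n_2$ while $d(C_n, \gamma) = n(n+5)/6$, producing the Diophantine condition $18 n_1 n_2 = n(n+5)$ subject to $n_1, n_2 \geq 5$, $n_1, n_2 \equiv 2 \pmod{3}$, and $n_1 + n_2 \leq n - 3$. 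This admits no admissible solution in most cases; for any residual $n$ where a factorization survives (e.g.\ $n = 40$ with $(n_1, n_2, n_3) = (5, 20, 15)$), the coefficient of $x^{\gamma + 1}$, computed by the same gap-enumeration technique, fails to match and delivers the final contradiction.
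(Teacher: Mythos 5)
Your reduction is correct and, for most of the argument, genuinely different from (and in places sharper than) the paper's. The passage to $G=C_{n_1}\cup\cdots\cup C_{n_k}$, the additivity of $\lceil n_i/3\rceil$, the $3$-adic valuation at $x=-3$, and the value at $x=-1$ do pin everything down to the single configuration you describe: $k=3$, $n\equiv 1\pmod 3$, $n_1,n_2\equiv 2\pmod 3$, $n_3\equiv 0\pmod 3$. Your counts $d(G,\gamma)=3n_1n_2$ and $d(C_n,\gamma)=n(n+5)/6$ are also correct, so $18n_1n_2=n(n+5)$ is a legitimate necessary condition. (The paper instead goes through $k\le 3$, parity of $k$ via $D(C_n,-1)$, and then a ten-case analysis mod $4$.)

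The gap is in your last sentence. You concede that $18n_1n_2=n(n+5)$ has admissible solutions --- indeed $(n_1,n_2,n_3)=(5,20,15)$ for $n=40$ satisfies all your constraints --- and then assert, with no computation, that the coefficient of $x^{\gamma+1}$ ``fails to match'' in every surviving case. That assertion is the entire content of the hardest step, and it is not established. It does hold for your own example (using $d(C_m,j)=\tfrac{m}{j}N(m,j)$ with $N(m,j)$ the number of compositions of $m$ into $j$ parts from $\{1,2,3\}$, one gets $d(C_{40},15)=26208$ against $10\cdot20\cdot3+5\cdot665\cdot3+5\cdot20\cdot125=23075$ for $C_5\cup C_{20}\cup C_{15}$), but one instance is not a proof: the family of solutions of $18n_1n_2=n(n+5)$ with $n_1+n_2+n_3=n$ and the stated congruences is not obviously finite, and there is no a priori guarantee that a single extra coefficient always separates the two polynomials. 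To close the argument you would have to prove, uniformly in $(n_1,n_2,n_3)$, that the two gap-enumeration expressions for the coefficient of $x^{\gamma+1}$ cannot agree whenever the $x^{\gamma}$ coefficients do --- a genuine Diophantine computation you have not carried out. The paper avoids this entirely by evaluating $D'$ and $D''$ at $x=-1$, which in the analogous case yields the identity $n_1n_2+n_1n_3+n_2n_3=0$, contradictory on its face. Either adopt such a device or actually supply the general $x^{\gamma+1}$ comparison.
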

\begin{proof}{ The assertion is trivial for $n=1, 2, 3$. Now, let $n\geq 4$. Let $G$ be a simple graph with
$D(G,x)=D(C_n,x)$. By Lemma \ref{regular}, $G$ is 2-regular and
so it is a disjoint union of cycles
$C_{n_1},\ldots,C_{n_k}$. Hence, by Lemma \ref{union},
$D(G,x)=\prod_{i=1}^k D(C_{n_i},x)$. Thus $n=n_1+\cdots+n_k$ and
by Lemma \ref{Zhang},
${\lceil\frac{n}{3}\rceil}={\lceil\frac{n_1}{3}\rceil}+\cdots+{\lceil\frac{n_k}{3}\rceil}.$
Therefore at least $k-2$ numbers of $n_1, \ldots,n_k$,
 are divisible by 3. On the other hand,
$$\od_3\,D(C_n,-3)=\sum_{i=1}^k \od_3\,D(C_{n_i},-3).$$
Now, by Lemma \ref{ord} it is easily  seen that $k\leq3$.

Now, let $\alpha_n:=D(C_n,-1)$, for every positive integer $n$.
Since $D(C_n,x)=\prod_{i=1}^k D(C_{n_i},x)$, therefore
$\alpha_n=\prod_{i=1}^k \alpha_{n_i}$. By Lemma \ref{-1},
$\alpha_n\in\{-1,3\}$. If $\alpha_n=3$, then only one of the
numbers $n_1, \ldots, n_k$ is divisible by 4, and therefore $k$
is an odd number. If $\alpha_n=-1$, then for every $i$, $1\leq
i\leq k$, $\alpha_{n_i}=-1$, and thus $k$ is an odd number. Since $k\leq3$, then $k\in\{1,3\}$.

It remains to show that $k\ne3$. Let $\beta_n:=D'(C_n,-1)$,
for every $n\geq 1$, where $D'(C_n,x)$ is the derivative of $D(C_n,x)$ with respect to $x$. Then by the recursive
 formula given in Lemma
\ref{cycle} we conclude that for every $n$, $n\geq 4$,
$\beta_n=-(\alpha_n+\beta_{n-1}+\beta_{n-2}+\beta_{n-3})$. Now,
by induction on $n$ and using Lemma \ref{-1}, we have:
\begin{equation}\label{lemma5}
   \beta_n=\left\{
  \begin{array}{ll}
    -n, & \hbox{if $n\equiv0\pmod4$;} \\
    n, & \hbox{if $n\equiv1\pmod4$;} \\
       0, & \hbox{otherwise.}
  \end{array}
\right.
\end{equation}

 Let $\theta_n:=D''(C_n,-1)$, for every $n$, $n\geq 1$. By Lemma \ref{cycle}, we conclude that
 for every $n\geq 4$,
$\theta_n=-2\alpha_n-2\beta_{n}-(\theta_{n-1}+\theta_{n-2}+\theta_{n-3}).$
Now, by induction on $n$, using Lemma \ref{-1} and
relation (\ref{lemma5}), we obtain the following:
\begin{equation}\label{lemma6}
\theta_n=\left\{
  \begin{array}{ll}
   n(n-4)/2, & \hbox{if $n\equiv0\pmod4$;} \\
    -n(n-1)/2, & \hbox{if $n\equiv1\pmod4$;} \\
     n(n+2)/4, & \hbox{if $n\equiv2\pmod4$;}\\
 0, & \hbox{if $n\equiv3\pmod4$.}
  \end{array}
\right.
\end{equation}

Now, let $k=3$. Thus
\begin{equation}\label{k=3}
D(C_n,x)=\prod_{i=1}^3 D(C_{n_i},x).
\end{equation}
By putting $x=-1$ in relation (\ref{k=3}), we find that  $\alpha_n=\alpha_{n_1}\alpha_{n_2}\alpha_{n_3}$.
 Since $n=n_1+n_2+n_3$, by Lemma \ref{-1}, ten cases can be considered:
\begin{enumerate}
  \item[1)]~~$n\equiv 0\pmod4$, $n_1\equiv 0\pmod4$,
$n_2\equiv 1\pmod4$, $n_3\equiv 3\pmod4$;
  \item[2)] ~ $n\equiv 0\pmod4$, $n_1\equiv 0\pmod4$,
$n_2\equiv 2\pmod4$, $n_3\equiv 2\pmod4$;
 \item[3)] ~ $n\equiv 1\pmod4$, $n_1\equiv 1\pmod4$,
$n_2\equiv 1\pmod4$, $n_3\equiv 3\pmod4$;
 \item[4)] ~ $n\equiv 1\pmod4$, $n_1\equiv 1\pmod4$,
$n_2\equiv 2\pmod4$, $n_3\equiv 2\pmod4$;
 \item[5)] ~ $n\equiv 1\pmod4$, $n_1\equiv 3\pmod4$,
$n_2\equiv 3\pmod4$, $n_3\equiv 3\pmod4$;
   \item[6)] ~ $n\equiv 2\pmod4$, $n_1\equiv 1\pmod4$,
$n_2\equiv 2\pmod4$, $n_3\equiv 3\pmod4$;
\item[7)] ~ $n\equiv 2\pmod4$, $n_1\equiv 2\pmod4$,
$n_2\equiv 2\pmod4$, $n_3\equiv 2\pmod4$;
 \item[8)] ~ $n\equiv 3\pmod4$, $n_1\equiv 1\pmod4$,
$n_2\equiv 1\pmod4$, $n_3\equiv 1\pmod4$;
  \item[9)] ~ $n\equiv 3\pmod4$, $n_1\equiv 1\pmod4$,
$n_2\equiv 3\pmod4$, $n_3\equiv 3\pmod4$;
  \item[10)] ~ $n\equiv 3\pmod4$, $n_1\equiv 2\pmod4$,
$n_2\equiv 2\pmod4$, $n_3\equiv 3\pmod4$.

\end{enumerate}
 For instance, if Case 1 occurs, by derivative of two sides of the equality (\ref{k=3})
 and putting $x=-1$, we find that $\beta_n=\beta_{n_1}\alpha_{n_2}\alpha_{n_3}+\alpha_{n_1}\beta_{n_2}\alpha_{n_3}+\alpha_{n_1}\alpha_{n_2}\beta_{n_3}$.
  Now, by Lemma \ref{-1} and relation (\ref{lemma5}) we obtain that $n_3=2n_2$ which is
  impossible. Similarly, in cases 2, 3, 4, 5, 6, 8, and 9 we obtain
  a contradiction.

If the Case 7 occurs then, by the second derivative of two sides of
equality (\ref{k=3}) and putting $x=-1$, we conclude
that:$$\theta_n=\theta_{n_1}\alpha_{n_2}\alpha_{n_3}+\alpha_{n_1}\theta_{n_2}\alpha_{n_3}+\alpha_{n_1}\alpha_{n_2}\theta_{n_3}
+2\beta_{n_1}\beta_{n_2}\alpha_{n_3}+2\beta_{n_1}\beta_{n_3}\alpha_{n_2}+2\beta_{n_2}\beta_{n_3}\alpha_{n_1}.$$
Now, by Lemma \ref{-1}, and using relations (\ref{lemma5}) and
(\ref{lemma6}) we find that, $n_1n_2+n_1n_3+n_2n_3=0$ which is
impossible. Similarly, for case 10 we get a contradiction. Thus
$k=1$ and the proof is complete.}
\end{proof}

By the following lemma and Theorem \ref{theorem5}, the next corollary follows immediately.
\begin{lem} {\rm\cite[Corollary 2]{aap}} If $G$ is ${\cal D}$-unique,
then for every $m$, $m\ge1$, $G\vee K_m$ is ${\cal D}$-unique.
\end{lem}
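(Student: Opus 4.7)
The plan is to first handle the case $m=1$ and then obtain the general statement by induction on $m$: since $\vee$ is associative and $K_{m-1}\vee K_1 = K_m$, one has $G\vee K_m = (G\vee K_{m-1})\vee K_1$, so iterating the $m=1$ case (starting from $G$) promotes $\mathcal{D}$-uniqueness from $G$ to $G\vee K_m$.

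For the base case $m=1$, the key ingredient is the identity
$$D(X\vee K_1, x) \;=\; x(1+x)^{|V(X)|} + D(X,x),$$
which I would prove by partitioning the dominating sets of $X\vee K_1$ according to whether they contain the unique vertex $v_0$ of $K_1$. Since $v_0$ is universal in $X\vee K_1$, every subset containing $v_0$ is dominating, contributing $x(1+x)^{|V(X)|}$; meanwhile, a subset $S\subseteq V(X)$ is dominating in $X\vee K_1$ precisely when it is a dominating set of $X$ (any such $S$ is nonempty and therefore automatically dominates $v_0$), contributing $D(X,x)$.

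Now suppose $H$ satisfies $D(H,x)=D(G\vee K_1, x)$. The next step is to locate a universal vertex of $H$. Observe that a vertex $v$ is universal in a graph $Y$ iff $\{v\}$ is a dominating set, so $d(Y,1)$ coincides with the number of universal vertices of $Y$. Since $d(G\vee K_1, 1)\ge 1$, the same holds for $H$, so $H$ has a universal vertex $w$, and hence $H = (H-w)\vee K_1$. Applying the displayed identity to both $H$ and $G\vee K_1$ — noting that $|V(H)|=|V(G)|+1$ is read off from the degree of the polynomial — the common term $x(1+x)^{|V(G)|}$ cancels and one obtains $D(H-w, x) = D(G,x)$. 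The $\mathcal{D}$-uniqueness of $G$ then forces $H-w\cong G$, so $H\cong G\vee K_1$, completing the base case.

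The step requiring the most care is detecting the universal vertex of $H$ from polynomial data alone; this hinges on the coefficient observation $d(Y,1)=$ (number of universal vertices of $Y$). Once that is in hand, everything reduces to algebraic cancellation in the explicit formula for $D(X\vee K_1, x)$ and the outer induction on $m$.
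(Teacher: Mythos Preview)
The paper does not give its own proof of this lemma: it is quoted from \cite{aap} (Corollary~2 there) and used as a black box to derive the corollary about $K_m\vee C_n$. So there is no in-paper argument to compare against.

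That said, your argument is correct. The identity $D(X\vee K_1,x)=x(1+x)^{|V(X)|}+D(X,x)$ follows exactly from the partition you describe, and the observation that $d(Y,1)$ equals the number of universal vertices of $Y$ is precisely what lets you locate an apex in $H$ from the polynomial alone. After that, cancellation gives $D(H-w,x)=D(G,x)$, the $\mathcal{D}$-uniqueness of $G$ forces $H-w\cong G$, and the induction on $m$ via $G\vee K_m=(G\vee K_{m-1})\vee K_1$ is routine. This is the standard route to the result, and is almost certainly what \cite{aap} does as well.
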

\begin{cor} For every two positive integers $m$ and $n$, $K_m\vee C_n$ is  ${\cal
D}$-unique. In particular $W_n$ is ${\cal D}$-unique.
\end{cor}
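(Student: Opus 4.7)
The plan is to simply combine the two ingredients the paper has just placed on the table: Theorem \ref{theorem5}, which asserts that $C_n$ is $\mathcal{D}$-unique for every positive integer $n$, and the lemma immediately preceding the corollary, which upgrades any $\mathcal{D}$-unique graph $G$ to a $\mathcal{D}$-unique join $G\vee K_m$ for every $m\geq 1$. The statement is genuinely a two-line corollary, so the work is essentially bookkeeping; there is no real obstacle.

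First I would fix positive integers $m$ and $n$ and take $G=C_n$. By Theorem \ref{theorem5}, $G$ is $\mathcal{D}$-unique. Applying the preceding lemma to this $G$ with the given $m$, we conclude that $C_n\vee K_m$ is $\mathcal{D}$-unique. Since the join is commutative on vertex and edge sets (it is symmetric in its two arguments by definition), $C_n\vee K_m = K_m\vee C_n$, and the first assertion follows.

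For the ``in particular'' clause, I would recall the convention adopted at the start of Section 2: $W_n$ denotes $K_1\vee C_{n-1}$. The cases $n=1,2,3$ require a short sanity check using the paper's conventions $C_1=K_1$ and $C_2=K_2$, so that $W_2=K_1\vee K_1=K_2$, $W_3=K_1\vee K_2=K_3$, and $W_4=K_1\vee C_3=K_4$, all of which are complete graphs and thus trivially $\mathcal{D}$-unique (their domination polynomials $(1+x)^n-1$ determine them). For $n\geq 5$, apply the first assertion with $m=1$ and the cycle $C_{n-1}$ (of order $n-1\geq 4$) to obtain that $W_n=K_1\vee C_{n-1}$ is $\mathcal{D}$-unique.

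The only thing to watch is that the preceding lemma really applies with $G=C_n$: the lemma has a blanket hypothesis only of $\mathcal{D}$-uniqueness on $G$, which Theorem \ref{theorem5} supplies for every $n\geq 1$, so no case analysis on small cycles is needed beyond the wheel-convention remarks above. This completes the proposal.
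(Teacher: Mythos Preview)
Your proposal is correct and matches the paper's own approach exactly: the paper simply states that the corollary ``follows immediately'' from Theorem~\ref{theorem5} together with the preceding lemma, which is precisely the combination you invoke. Your extra discussion of the small wheels $W_2,W_3,W_4$ is harmless but unnecessary, since Theorem~\ref{theorem5} is stated for every positive integer $n$ (with the conventions $C_1=K_1$, $C_2=K_2$), so the first assertion already covers $K_1\vee C_{n-1}$ for all $n\geq 2$ without any separate case analysis.
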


\noindent{\bf Acknowledgements.} The research of the first author
was in part supported by a grant (No. 87050212) from school of
Mathematics, Institute for Research in Fundamental Sciences (IPM).

\end{document}